\documentclass{amsproc}%
\usepackage{amsfonts}%
\usepackage{amsmath}%
\setcounter{MaxMatrixCols}{30}%
\usepackage{amssymb}%
\usepackage{graphicx}
\providecommand{\U}[1]{\protect \rule{.1in}{.1in}}
\theoremstyle{plain}

\newtheorem{lemma}{Lemma}

\newtheorem{theorem}{Theorem}
\numberwithin{equation}{section}
\begin{document}
\title[Local Derivations]{Local derivations on Rings containing a von Neumann algebra and a question of Kadison.}
\author{Don Hadwin}
\address{Department of Mathematics, University of New Hampshire, Durham, NH 03824, USA }
\email{don@unh.edu}
\urladdr{http://euclid.unh.edu/\symbol{126}don}
\author{Jiankui Li}
\address{Department of Mathematics, East China University of Science and Technology,
Shanghai 200237, China}
\email{jiankuili@yahoo.com}
\author{Qihui Li}
\address{Department of Mathematics, East China University of Science and Technology,
Shanghai 200237, China}
\email{lqh991978@gmail.com}
\author{Xiujuan Ma}
\address{Department of Mathematics, Hebei University of Technology, Tianjing, 300130, China}
\email{mxjsusan@hebut.edu.cn}
\subjclass[2000]{Primary 46L57, 13N15; Secondary 16W25}
\keywords{local derivation, derivation, affiliated algebra, von Neumann algebra}

\begin{abstract}
We prove that if $\mathcal{M}$ is a von Neumann algebra whose abelian summand
is discrete, then every local derivation on the algebra of all measurable
operators affilated with $\mathcal{M}$ is a derivation. This answers a
question of Richard Kadison.

\end{abstract}
\maketitle

\bigskip

At a conference held in Texas A $\&$ M University of 2012, Richard Kadison
gave a talk about his joint work with Zhe Liu [10, 11] in which they proved
that the only derivation that maps the algebra $S(\mathcal{M})$ of closed
densely defined operators affiliated  with a factor von Neumann algebra
$\mathcal{M}$ of type II$_{1}$ into that von Neumann algebra is zero. Kadison
asked whether every local derivation on $S(\mathcal{M})$ is a derivation.

In this note we prove a general ring-theoretic result which implies that for
all von Neumann algebras $\mathcal{M}$ whose abelian summand is discrete,
every local derivation on $S(\mathcal{M})$ of all measurable operators is a derivation.

If $\mathcal{R}$ is a ring (resp. algebra) and $\delta:\mathcal{R}%
\rightarrow \mathcal{R}$ is an additive (resp. linear) mapping, we say that
$\delta$ is a \emph{derivation} if, for all $a,b\in \mathcal{R}$, we have
\[
\delta \left(  ab\right)  =\delta \left(  a\right)  b+a\delta \left(  b\right)
.
\]
We say that an additive mapping $\delta$ is a \emph{local derivation} if, for
every $x\in \mathcal{R}$ there is a derivation $\rho_{x}$ on $\mathcal{R}$ such
that
\[
\delta \left(  x\right)  =\rho_{x}\left(  x\right)  .
\]

To prove our main result, we need two lemmas. The first is a result of [3,
Corollary 4.5]. Suppose $\mathcal{R}$ is a ring with identity $1$ and $n\geq2$
is an integer. We say that a subset $\left \{  E_{ij}:1\leq i,j\leq n\right \}
\subset \mathcal{R}$ is a \emph{system of }$n\times n$ \emph{matrix units for
}$\mathcal{R}$ if and only if $\sum_{i=1}^{n}E_{ii}=1$ and, for $1\leq
i,j,s,t\leq n$, $E_{ij}E_{st}=0$ if $j\neq s$ and $E_{ij}E_{st}=E_{it}$ if
$j=s$.

In [3, p.11], Bresar shows that for any unital ring $\mathcal{B}$, the ring
$M_{n}(\mathcal{B})$ is generated by the set of all idempotents in
$\mathcal{B}$, where $2\le n$.

The following Lemma 1 is a special case of [3, Corollary 4.5].

\begin{lemma}
If $\mathcal{R}$ is a ring with identity $1$ that possesses a system of
$n\times n$ matrix units for some $n\geq2,$ then every local derivation on
$\mathcal{R}$ is a derivation.
\end{lemma}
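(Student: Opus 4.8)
The plan is to deduce the lemma from the cited result of Bresar, via the structure of rings with matrix units. First I would note that a system of $n\times n$ matrix units $\{E_{ij}\}$ exhibits $\mathcal{R}$ as the full matrix ring $M_{n}(\mathcal{B})$ over the corner $\mathcal{B}=E_{11}\mathcal{R}E_{11}$, via the map $a\mapsto(E_{1i}aE_{j1})_{1\le i,j\le n}$; this is a routine consequence of the matrix-unit relations $\sum_{i}E_{ii}=1$ and $E_{ij}E_{kl}=\delta_{jk}E_{il}$. By the observation of Bresar recalled above (see [3, p.~11]), the ring $M_{n}(\mathcal{B})$ is generated by its idempotents. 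Hence the hypotheses of [3, Corollary 4.5] are met, and the desired conclusion---that every local derivation on $\mathcal{R}$ is a derivation---follows at once.

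It is worth indicating the shape of a self-contained argument, as it shows where the real work is. Two elementary facts are available for free. First, if $\delta$ is a local derivation and $d$ is any derivation, then $\delta-d$ is again a local derivation (shift each pointwise realizer $\rho_{x}$ by $d$); in particular one is free to subtract inner derivations. Second, for every idempotent $e$, applying the derivation $\rho_{e}$ with $\delta(e)=\rho_{e}(e)$ to the relation $e=e^{2}$ yields $\delta(e)=\delta(e)e+e\delta(e)$, whence $e\delta(e)e=0$ and $(1-e)\delta(e)(1-e)=0$; so $\delta(e)$ is purely off-diagonal for the Peirce decomposition of $e$. Combining these, for a fixed idempotent $e$ the element $u=e\delta(e)(1-e)-(1-e)\delta(e)e$ satisfies $eu-ue=\delta(e)$, so subtracting the inner derivation $x\mapsto xu-ux$ from $\delta$ annihilates $e$; iterating such Peirce-compatible corrections, I would arrange that $\delta(E_{ij})=0$ for all $i,j$.

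The remaining step---upgrading this to the Leibniz identity on all of $M_{n}(\mathcal{B})$---is the genuine obstacle, and it is exactly what Bresar's machinery handles. The difficulty is that a local derivation need not even be a Jordan derivation: there is no automatic relation $\delta(x^{2})=\delta(x)x+x\delta(x)$, because the realizers $\rho_{x}$ and $\rho_{x^{2}}$ bear no relation to one another. Consequently, knowing merely that $\delta$ agrees with a derivation on the generating matrix units, together with additivity, does \emph{not} suffice to conclude that $\delta$ is a derivation. Pushing through the Leibniz rule requires systematically exploiting the wealth of idempotents $E_{ii}$, $E_{ii}+E_{ij}$, and $E_{ii}+x$ with $x\in E_{ii}\mathcal{R}E_{jj}$ ($i\neq j$), in the style of the functional-identity computations of [3]. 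I would therefore present the proof in the compact form of the first paragraph, appealing to [3, Corollary 4.5] for the mechanism.
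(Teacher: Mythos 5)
Your proposal is correct and follows essentially the same route as the paper: identify $\mathcal{R}$ with $M_{n}(E_{11}\mathcal{R}E_{11})$ via the matrix units, recall Bresar's observation that such a matrix ring is generated by its idempotents, and invoke [3, Corollary 4.5]. Your additional remarks on Peirce decompositions and on why annihilating $\delta$ on the matrix units alone does not suffice are accurate but not needed, since the paper likewise delegates that work entirely to Bresar's result.
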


Note that, in general, derivations need not leave ideals invariant, e.g.,
differentiation on the polynomials. However, if $p$ a central idempotent, then
every derivation (hence, every local derivation) $\delta$ leaves
$p\mathcal{R}$ and $\left(  1-p\right)  \mathcal{R}$ invariant, since%
\[
\delta \left(  pa\right)  =\delta \left(  \left(  pa\right)  p\right)
=pa\delta \left(  p\right)  +\delta \left(  pa\right)  p.
\]
Moreover, $p\mathcal{R}$ is isomorphic to $\mathcal{R}/\left(  1-p\right)
\mathcal{R}$. This yields the following corollary to the preceding lemma. A
family $\mathcal{P}$ of central idempotents for a ring $\mathcal{R}$ is
\emph{separating} if and only if, for each nonzero $x\in \mathcal{R}$, there is
a $p\in \mathcal{P}$ such that $px\neq0.$

\begin{lemma}
Suppose $\mathcal{R}$ is a ring with identity and $\mathcal{P}$ is a
separating family of central idempotents such that, for each $p\in \mathcal{P}%
$, every local derivation on $p\mathcal{R}$ is a derivation. Then every local
derivation on $\mathcal{R}$ is a derivation.
\end{lemma}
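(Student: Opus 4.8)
The plan is to fix an arbitrary local derivation $\delta$ on $\mathcal{R}$, restrict it to each corner $p\mathcal{R}$ with $p\in\mathcal{P}$, invoke the hypothesis there, and then glue the pieces back together using that $\mathcal{P}$ separates points.

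First I would record two consequences of the displayed computation preceding the lemma. For any central idempotent $p$ (in particular for $p$ and for $1-p$), every derivation and every local derivation on $\mathcal{R}$ leaves $p\mathcal{R}$ invariant. Combining the invariance of $p\mathcal{R}$ and of $(1-p)\mathcal{R}$ with additivity and the decomposition $x = px + (1-p)x$, one obtains $\delta(px) = p\,\delta(x)$ for every $x\in\mathcal{R}$, and the same identity holds with any derivation in place of $\delta$. Next I would verify that the restriction $\delta|_{p\mathcal{R}}$ is a local derivation on the ring $p\mathcal{R}$, whose identity is $p$. By the invariance just noted it is an additive self-map of $p\mathcal{R}$. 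Given $x\in p\mathcal{R}$, choose a derivation $\rho_x$ on $\mathcal{R}$ with $\delta(x)=\rho_x(x)$; since $\rho_x$ also leaves $p\mathcal{R}$ invariant, its restriction $\rho_x|_{p\mathcal{R}}$ is a derivation on $p\mathcal{R}$, and it satisfies $\delta|_{p\mathcal{R}}(x) = \rho_x|_{p\mathcal{R}}(x)$. By the hypothesis of the lemma, $\delta|_{p\mathcal{R}}$ is therefore a derivation on $p\mathcal{R}$.

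Finally I would reassemble. Fix $a,b\in\mathcal{R}$ and $p\in\mathcal{P}$. Since $pa,pb\in p\mathcal{R}$ and $(pa)(pb)=pab$ (as $p$ is a central idempotent), the derivation property of $\delta|_{p\mathcal{R}}$ gives $\delta(pab) = \delta(pa)(pb) + (pa)\delta(pb)$. Replacing $\delta(pab)$, $\delta(pa)$, $\delta(pb)$ by $p\delta(ab)$, $p\delta(a)$, $p\delta(b)$ respectively and simplifying with the centrality of $p$ turns this into $p\bigl(\delta(ab) - \delta(a)b - a\delta(b)\bigr) = 0$. As this holds for every $p\in\mathcal{P}$ and $\mathcal{P}$ is separating, $\delta(ab) = \delta(a)b + a\delta(b)$, which completes the argument.

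I expect the only point requiring care — calling it the ``main obstacle'' is generous — to be the middle step: one must make sure that the derivation witnessing the value of $\delta$ at a point of $p\mathcal{R}$ itself restricts to a genuine derivation of the corner $p\mathcal{R}$, which is precisely why the invariance observation is needed for arbitrary derivations and not merely for $\delta$. The remaining steps are routine bookkeeping with the orthogonal decomposition $1 = p + (1-p)$.
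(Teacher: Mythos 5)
Your proposal is correct and follows essentially the same route as the paper: show $\delta(px)=p\delta(x)$, conclude that $\delta|_{p\mathcal{R}}$ is a local derivation on the corner, apply the hypothesis, and glue via the separating family. You are in fact slightly more explicit than the paper about two points it leaves implicit — that the witnessing derivation restricts to a genuine derivation of $p\mathcal{R}$, and that invariance of both $p\mathcal{R}$ and $(1-p)\mathcal{R}$ together with additivity yields $\delta(px)=p\delta(x)$ — but the argument is the same.
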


\begin{proof}
Suppose $\delta:\mathcal{R}\rightarrow \mathcal{R}$ is a local derivation,
$a\in \mathcal{R}$ and $p\in \mathcal{P}$. Then there is a derivation $\rho$ on
$\mathcal{R}$ such that%
\[
\delta \left(  pa\right)  =\rho \left(  pa\right)  =\rho \left(  p\left(
pa\right)  \right)  =p\rho \left(  pa\right)  +\rho \left(  p\right)
pa=p\left[  \rho \left(  pa\right)  +\rho \left(  p\right)  a\right]
=pgr\left(  pa\right)  =p\delta \left(  pa\right)  .
\]
It follows that $\delta \left(  p\mathcal{R}\right)  \subset p\mathcal{R}$ and
that $\delta|p\mathcal{R}$ is a local derivation. Hence $\delta|p\mathcal{R}$
is a derivation. Thus, for every $a,b\in \mathcal{R}$ and every $p\in
\mathcal{P}$, we have%
\[
p\delta \left(  ab\right)  =\delta \left(  pab\right)  =\delta \left(  \left(
pa\right)  \left(  pb\right)  \right)  =
\]%
\[
pa\delta \left(  pb\right)  +\delta \left(  pa\right)  pb=p\left[
a\delta \left(  b\right)  +\delta \left(  a\right)  b\right]  .
\]
Hence%
\[
p\left[  \delta \left(  ab\right)  -\left[  a\delta \left(  b\right)
+\delta \left(  a\right)  b\right]  \right]  =0.
\]
Since $\mathcal{P}$ is separating, we see that $\delta$ is a derivation on
$\mathcal{R}$.
\end{proof}

\bigskip

An abelian von Neumann algebra $\mathcal{M}$ is discrete if it is generated by
its minimal nonzero projections; equivalently, if the identity operator in
$\mathcal{M}$ is the sum of the minimal projections in $\mathcal{M}$. Since
$\mathcal{M}$ is abelian, it follows that $Q\mathcal{M}=\mathbb{C}Q$ for every
minimal projection $Q$ in $\mathcal{M}$. Every von Neumann algebra on a
Hilbert space $H$ is the direct sum of algebras $\mathcal{M}_{n}$ with $1\leq
n<\infty$ (the finite type $I_{n}$ summands) and a von Neumann algebra
$\mathcal{M}_{\infty}$ which is the direct sum of algebras of type $I_{\infty
},$ $II,$ and $III$ (not all summands need be present). Call the corresponding
central projections $P_{n}$ with $1\leq n\leq \infty$.

\bigskip

\begin{theorem}
Suppose $\mathcal{M}$ is a von Neumann algebra on a Hilbert space $H$. Then

\begin{enumerate}
\item If $\mathcal{M}_{1}=0$ and $\mathcal{R}$ is a ring containing
$\mathcal{M}$ with the same identity as $\mathcal{M}$ such that $\mathcal{P}%
=\left \{  P_{n}:2\leq n\leq \infty \right \}  $ is a separating family of central
idempotents for $\mathcal{R}$, then every local derivation on $\mathcal{R}$ is
a derivation.

\item If $\mathcal{M}_{1}$ is discrete, then every local derivation on the
algebra of closed densely defined operators affiliated with $\mathcal{M}$ is a derivation.
\end{enumerate}
\end{theorem}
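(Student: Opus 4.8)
The plan is to reduce both parts to Lemma 1 and Lemma 2 by way of the type decomposition of $\mathcal{M}$. For (1), since $\mathcal{M}_1=0$ we have $\mathcal{M}=\bigoplus_{2\le n\le\infty}\mathcal{M}_n$, and $\mathcal{P}=\{P_n:2\le n\le\infty\}$ is by hypothesis a separating family of central idempotents for $\mathcal{R}$. By Lemma 2 it is enough to show that for each $n$ with $2\le n\le\infty$ every local derivation on $P_n\mathcal{R}$ is a derivation, and by Lemma 1 this follows once $P_n\mathcal{R}$ is exhibited with a system of matrix units whose diagonal idempotents sum to its identity $P_n$. For $2\le n<\infty$ the algebra $P_n\mathcal{M}=\mathcal{M}_n$ is of type $I_n$ and so already contains a system of $n\times n$ matrix units summing to $P_n$, and these lie in $\mathcal{M}\subseteq\mathcal{R}$. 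For $n=\infty$ the algebra $\mathcal{M}_\infty$ is a direct sum of algebras of types $I_\infty$, $II$ and $III$ and hence has no central summand of type $I_k$ with $k$ finite; by the halving lemma of comparison theory there are equivalent projections $F_1,F_2\in\mathcal{M}_\infty$ with $F_1+F_2=P_\infty$, and these together with a partial isometry implementing $F_1\sim F_2$ form a system of $2\times2$ matrix units summing to $P_\infty$. Thus Lemma 1 applies to each $P_n\mathcal{R}$, and Lemma 2 then gives (1).

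For (2), let $\mathcal{R}=S(\mathcal{M})$ be the algebra of closed densely defined operators affiliated with $\mathcal{M}$; this is a unital algebra containing $\mathcal{M}$ with the same identity, every central projection $z$ of $\mathcal{M}$ is a central idempotent of $\mathcal{R}$, and $z\mathcal{R}=S(z\mathcal{M})$. In particular $\{P_1,\,1-P_1\}$ is a separating family of central idempotents for $\mathcal{R}$, so by Lemma 2 it suffices to treat $(1-P_1)\mathcal{R}$ and $P_1\mathcal{R}$ separately. On $(1-P_1)\mathcal{R}=S\big((1-P_1)\mathcal{M}\big)$ the von Neumann algebra $(1-P_1)\mathcal{M}=\bigoplus_{2\le n\le\infty}\mathcal{M}_n$ has zero abelian summand, and a nonzero element of $S\big((1-P_1)\mathcal{M}\big)$ has a nonzero component in some $P_n\,S\big((1-P_1)\mathcal{M}\big)$, so $\{P_n:2\le n\le\infty\}$ is separating there; hence Part (1) applies and every local derivation on $(1-P_1)\mathcal{R}$ is a derivation. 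On $P_1\mathcal{R}=S(\mathcal{M}_1)$: since $\mathcal{M}_1$ is discrete it is the direct sum of its minimal projections $\{Q_i\}_{i\in I}$, so $S(\mathcal{M}_1)$ is the full product $\prod_{i\in I}\mathbb{C}Q_i$ of diagonal affiliated operators; any linear derivation $\rho$ of this algebra satisfies $\rho(Q_i)=\rho(Q_i^2)=2Q_i\rho(Q_i)$, which forces $\rho(Q_i)=0$, and then $Q_i\rho(x)=\rho(Q_ix)=\lambda_i\rho(Q_i)=0$ for every $x=(\lambda_i)_i$, so $\rho=0$; consequently the only local derivation on $S(\mathcal{M}_1)$ is the zero map, which is a derivation. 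Combining the two cases through Lemma 2 (applied to $\{P_1,1-P_1\}$) shows that every local derivation on $S(\mathcal{M})$ is a derivation, proving (2).

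The main obstacle is twofold. First, the case $n=\infty$ of (1): one needs the halving lemma, namely that a von Neumann algebra with no finite type $I$ central summand contains a $2\times2$ system of matrix units summing to its identity — this is where comparison theory enters, via the center-valued trace on the finite part and proper infiniteness elsewhere. Second, for (2) one must justify the structural facts about $S(\mathcal{M})$ used above: that central projections of $\mathcal{M}$ remain central idempotents of $S(\mathcal{M})$, that $z\,S(\mathcal{M})=S(z\mathcal{M})$ for central $z$, that the type-decomposition projections stay a separating family in the affiliated algebra, and that $S$ of a discrete abelian von Neumann algebra is a product of copies of $\mathbb{C}$ (so that it admits no nonzero linear derivation). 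With these in hand, the rest is a routine two-step application of Lemmas 1 and 2.
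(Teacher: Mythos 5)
Your proof is correct and follows essentially the same route as the paper: matrix units in each $P_n\mathcal{M}$ (halving for $P_\infty$) feed Lemma 1, and the separating family of central idempotents feeds Lemma 2, with the discrete abelian summand handled by observing that the corner over each minimal projection reduces to $\mathbb{C}$. The only cosmetic differences are that you apply Lemma 2 in two nested stages via $\{P_1,1-P_1\}$ rather than once with the combined family $\{Q_\lambda\}\cup\{P_n:2\le n\le\infty\}$, and that you kill derivations on all of $S(\mathcal{M}_1)\cong\prod_i\mathbb{C}Q_i$ at once instead of corner by corner.
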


\begin{proof}
$\left(  1\right)  .$

By [9, Theorem 6.6.5], we know that $P_{n}\mathcal{M}$ contains an $n\times n$
system of matrix units for $2\leq n<\infty$, and it follows from [9, Lemma
6.5.6] that $P_{\infty}\mathcal{M}$ contains a $2\times2$ system of matrix
units. Since $P_{n}\mathcal{M}\subset P_{n}\mathcal{R}$ is a unital embedding
and $\mathcal{P}$ is separating, it follows from the two lemmas above that
every local derivation on $\mathcal{R}$ is a derivation.

$\left(  2\right)  .$ Let $\mathcal{R}$ be the algebra of all measurable
operators affiliated with $\mathcal{M}$. However, since $\mathcal{M}_{1}$ is
discrete, $P_{1}$ is the orthogonal sum of a family $\left \{  Q_{\lambda
}:\lambda \in \Lambda \right \}  $ of minimal projections and that, for each
$\lambda \in \Lambda$, $Q_{\lambda}\mathcal{M}=\mathbb{C}Q_{\lambda},$ which
means that $Q_{\lambda}\mathcal{M}^{\prime}=B\left(  Q_{\lambda}H\right)
Q_{\lambda},$ which, in turn, implies that $Q_{\lambda}\mathcal{R}%
=\mathbb{C}Q_{\lambda}$, so every local derivation on $Q_{\lambda}\mathcal{R}$
is a derivation. Since Since the elements of $\mathcal{R}$ are densely defined
operators on $H$, and $\sum_{\lambda \in \Lambda}Q_{\lambda}+\sum_{2\leq
n\leq \infty}P_{n}=1$, it follows that $\left \{  Q_{\lambda}:\lambda \in
\Lambda \right \}  \cup \left \{  P_{n}:2\leq n\leq \infty \right \}  $ is a
separating family of central idempotents for $\mathcal{R}$. Arguing as in the
proof of $\left(  1\right)  $ we can apply the lemmas to see that every local
derivation on $\mathcal{R}$ is a derivation.
\end{proof}

In [1, Theorem 3.8], the authors give necessary and sufficient conditions on a
commutative von Neumann algebra $\mathcal{M}$ for the existence of local
derivations which are not derivations on the algebra $S(\mathcal{R})$ of
measurable operators affiliated with $\mathcal{M}$.


For a von Neumann algebra $\mathcal{M}$, we can define the set $S(\mathcal{M}%
)$  of all measurable operators affiliated with $\mathcal{M}$ and the set
$LS(\mathcal{M})$ of all local measurable operators affiliated with
$\mathcal{M}$

In [12], Muratov and Chilin show  that $LS(\mathcal{M})$ is a unital $\ast
$-algebra when equipped with algebraic  operations of the strong addition,
multiplication, and taking the  adjoint of an operator and $S(\mathcal{S})$ is
a unital $\ast$-subalgebra  of $LS(\mathcal{M})$.

Suppose that $\mathcal{M}$ is a von Neumann algebra with a faithful normal
semi-finite trace $\tau$. Let $S(\mathcal{M}, \tau)$ denote the algebra of all
$\tau$-measurable operators affiliated with $\mathcal{M}$. It is clear that
$\mathcal{M }\subseteq S(\mathcal{M}) \subseteq LS(\mathcal{M})$ ( see for
details [1, 2, 13]).

In Theorem 1, if we choose that $\mathcal{R}$ is $\mathcal{M}$, $S(\mathcal{M}%
)$ or $LS(\mathcal{M})$, we can obtain [1, Theorem 2.5 and Proposition 2.7].

By Theorem 1 and [1, Theorem 3.8], we can completely answer [15, Conjecture 48].

\section*{Acknowledgement}

The first author is partially supported by a travel grant from the Simons
Foundation and the second author is supported by NSF of China.

\end{document}